\pgfplotsset{compat=1.14} 
\newtheorem{theorem}{Theorem}[section]
\newtheorem{proposition}[theorem]{Proposition}
\newtheorem{corollary}[theorem]{Corollary}
\newtheorem{conjecture}[theorem]{Conjecture}
\theoremstyle{definition}
\newtheorem{definition}[theorem]{Definition}
\theoremstyle{remark}
\newtheorem{remark}[theorem]{Remark}
\numberwithin{equation}{section}
\newcommand{\inn}{~ \hat{\in}~ }
\begin{document}

\title{On the Erd\H{o}s-Tur\'{a}n additive base conjecture}

\author{T. Agama}
\address{Department of Mathematics, African Institute for Mathematical science, Ghana
}
\email{theophilus@aims.edu.gh/emperordagama@yahoo.com}


\subjclass[2010]{Primary 11P32; Secondary 11P21}

\date{\today}


\keywords{density; energy; circle of partition}

\begin{abstract}
In this paper, we formulate and prove several variants of the Erd\H{o}s-Tur\'{a}n additive bases conjecture.
\end{abstract}

\maketitle

\section{Introduction}

The study of additive bases, sets $\mathbb{B}\subset\mathbb{N}$ for which every sufficiently large integer can be represented as a sum of a fixed number of elements of $\mathbb{B}$, lies at the crossroads of classical additive number theory and modern additive combinatorics. Fundamental motivating examples include the (weak) Goldbach problem for primes (the ternary Goldbach theorem of Helfgott), which illustrates how deep analytic and combinatorial methods interact with structural properties of a sparse set. In particular, the resolution of the ternary Goldbach conjecture by Helfgott provides a canonical illustration of the kind of global additive covering phenomena that we seek to understand. \cite{helfgott2013ternary}\\

Among the central open problems in this area is the classical conjecture of Erd\H{o}s and Tur\'an, commonly referred to as the Erd\H{o}s--Tur\'an additive bases conjecture: if a set $\mathbb{B}\subset\mathbb{N}$ satisfies
$$
r_{\mathbb{B}}(n):=\#\{(a,b)\in\mathbb{B}^2:a+b=n\}>0
$$
for all sufficiently large $n$, then $r_{\mathbb{B}}(n)$ is unbounded as $n\to\infty$. Despite many partial results and a substantial body of surrounding theory developed in the second half of the twentieth century and beyond (for background and techniques, see, e.g., \cite{erdos1941problem,tao2006additive}), a full resolution remains elusive and, in particular, resists approaches that treat bases only through coarse density measures.\\

In this paper, we introduce and exploit a new geometric--combinatorial framework, the \emph{circle of partition} (CoP), which refines the usual perspective on additive representations by organizing representations $a+b=n$ as \emph{axes} (ordered pairs of complementary weights) on a combinatorial circle indexed by $n$. Three central quantitative notions naturally emerge from this viewpoint and form the technical core of our approach: (i) the density of points on CoPs relative to a given subset $\mathbb{M}\subset\mathbb{N}$, which connects classical asymptotic density with the proportion of axes that meet $\mathbb{M}$; (ii) the classification of the behaviour of CoP into \emph{ascending}, \emph{descending} and \emph{stationary} regimes as the generator $n$ varies, a coarse dynamical invariant that encodes how the supply of representations changes with scale; and (iii) the \emph{$\ell$-th fold energy} of a collection of CoPs, an aggregate statistic (summed over power-sequences $n^\ell$) that plays a role analogous to energy functionals in additive combinatorics and which is inspired by the combinatorial energy methods of Ruzsa and others. The combination of these tools allows us to reframe the Erd\H{o}s--Tur\'an conjecture in a way that makes certain density- and energy-based obstructions explicit, and to prove several natural variants of the conjecture under additional quantitative hypotheses.\\

Technically, our main contributions are as follows.\\
\begin{itemize}
  \item We formalize the CoP construction and establish its basic combinatorial geometry: points, axes, centers, and chords, together with elementary uniqueness and counting lemmas for axes (Section \ref{sec:cop}). These foundational properties show that the mapping between additive representations and axes of CoPs is bijective and stable under natural operations, which allows coarse asymptotic counting to be translated into structural statements about CoPs.
  \bigskip
  
  \item We introduce and compare two density notions: the classical asymptotic density of a subset $\mathbb{H}\subset\mathbb{N}$ and the density of axes in CoPs that meet $\mathbb{H}$. Proposition \ref{inequality} gives sharp inequalities linking these two densities and hence precisely quantifies how ordinary density bounds control the proportion of axes captured by a set. This comparison is the springboard for several corollaries showing that ``very dense'' sets necessarily generate many representations. These density comparisons may be viewed as discrete analogs of the density-versus-energy tradeoffs familiar from modern additive combinatorics \cite{tao2006additive,nathanson1996additive}.
  \bigskip
  
  \item We define ascending/descending/stationary CoPs and show (Theorem \ref{ascendingspots}) that under mild asymmetry hypotheses between a set and its complement, ascending behaviour must occur at infinitely many scales. Intuitively, if a set occupies a positive proportion of integers and its complement is not too large relative to it, then representations inside the set cannot remain uniformly scarce at all large scales.
  \bigskip
  
  \item To reach beyond the positive-density hypotheses, we introduce the $\ell$-th fold energy $\mathcal{E}(\ell,\mathbb{M})$ and relate the divergence of this energy to the existence of infinitely many ascending spots (Proposition \ref{spotenergy}). The energy viewpoint captures cumulative structural richness across scales and is particularly effective for sequences that are thin but arithmetically structured (compare the role of energies in sumset estimates and in Pl\"unnecke--Ruzsa theory \cite{ruzsa1994plunnecke}).
  \bigskip
  
  \item Combining these tools, we obtain several variants of the Erd\H{o}s--Tur\'an statement: density-constrained versions (very dense sequences are shown to produce unbounded representation counts) and energy-driven versions (divergent $\ell$-fold energy implies infinitely many large representation counts).
\end{itemize}
\bigskip

\textbf{Relation to prior work.} Our perspective draws on and synthesizes ideas from classical additive number theory and modern additive combinatorics. The comparison inequalities between ordinary density and CoP-density are inspired by elementary counting arguments that appear in the treatment of Nathanson of representations by sumsets \cite{nathanson1996additive}; the energy viewpoint and the emphasis on cumulative scale-summed statistics are in the spirit of the Ruzsa energy methods and Pl\"unnecke-type inequalities \cite{ruzsa1994plunnecke}. At a philosophical level, the CoP formalism provides a geometric bookkeeping device that complements Fourier-analytic and combinatorial tools used in recent advances and naturally sits alongside the frameworks developed in the literature on additive bases and covering systems (see, e.g., \cite{tao2006additive,nathanson1996additive}).
\bigskip

\textbf{Organization of the paper.} In Section \ref{sec:cop}, we define the Circle of Partition and record basic combinatorial properties, uniqueness of axes, and elementary counting results. Section \ref{sec:density} introduces the CoP-density and establishes the comparison inequalities with the classical density; Section \ref{sec:ascdesc} is devoted to the ascending /descending /stationary classification and contains Theorem \ref{ascendingspots} together with corollaries that give density-based variants of the Erd\H{o}s--Tur\'an additive base conjecture. Section \ref{sec:energy} defines the $\ell$-th fold energy, analyzes its basic behaviour (finite and infinite regimes) and proves Proposition \ref{spotenergy}; Section \ref{sec:main} collects the main theorems that realize variants of the Erd\H{o}s--Tur\'an conjecture under density or energy hypotheses and discusses examples, limits, and possible refinements.
\bigskip

\noindent\textbf{Notation and conventions.} Throughout, we write $\mathbb{N}=\{1,2,\dots\}$, $\mathbb{N}_n=\{1,\dots,n\}$, and for a set $\mathbb{S}\subset\mathbb{N}$, we denote by $r_{\mathbb{S}}(n)$ the number of ordered representations of $n$ as the sum of two elements of $\mathbb{S}$. As usual $\mathcal{D}(\mathbb{S})$ denotes (when it exists) the asymptotic density of $\mathbb{S}$. The unstated logarithmic constants are absolute. Complete proofs of the statements summarized above appear in the indicated sections.

\section{Problem Statement}

Let $\mathbb{S}$ be a subset of the natural numbers $\mathbb{N}$ and $k\in \mathbb{N}$ be fixed. Then $\mathbb{S}$ is said to be an \textbf{additive base} of order $k$ if every natural number can be expressed as a sum of $k$ elements of $\mathbb{S}$. The weak Goldbach conjecture suggests that the set of prime numbers is an additive base of \emph{order} three \cite{helfgott2013ternary}. The Erd\H{o}s-Tur\'{a}n additive base conjecture is the assertion that all additive bases qualify very sufficiently to be an additive additive base in an asymptotic sense. In particular, we have the following conjecture of Erd\H{o}s and Tur\'{a}n (see \cite{erdos1941problem})

\begin{conjecture}[Erd\H{o}s-Tur\'{a}n]
Let $\mathbb{B}\subset \mathbb{N}$ and consider
\begin{align}
  r_{\mathbb{B}}(n):=\# \left \{(a,b)\in \mathbb{B}^2~|~ a+b=n\right\}.\nonumber
\end{align}
If $r_{\mathbb{B}}(n)>0$ for all sufficiently large values of $n$, then 
\begin{align}
    \limsup \limits_{n\longrightarrow \infty}~r_{\mathbb{B}}(n)=\infty.\nonumber
\end{align}
\end{conjecture}

This conjecture has garnered the attention of many authors, but remains unresolved \cite{tao2006additive}. By introducing the language of \emph{Circles of Partition} and associated statistics, we reformulate the conjecture in the following way 

\begin{conjecture}[Erd\H{o}s-Tur\'{a}n]
Let $\mathbb{B}\subset \mathbb{N}$ and consider 
\begin{align}
   \mathcal{G}_{\mathbb{B}}(n)=\nu(n,\mathbb{B})=\# \left \{ \mathbb{L}_{[x],[y]}\inn \mathcal{C}(n,\mathbb{B})\right \}.\nonumber
\end{align}
If $\mathcal{G}_{\mathbb{B}}(n)>0$ for all sufficiently large values of $n$, then
\begin{align}
    \limsup \limits_{n\longrightarrow \infty}\mathcal{G}_{\mathbb{B}}(n)=\infty.\nonumber
\end{align}
\end{conjecture}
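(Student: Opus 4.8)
The plan is to argue by contradiction. Suppose $\mathbb{B}$ satisfies $\mathcal{G}_{\mathbb{B}}(n)>0$ for all sufficiently large $n$ but $\limsup_{n\to\infty}\mathcal{G}_{\mathbb{B}}(n)<\infty$, so that there is a constant $M$ with $1\le \mathcal{G}_{\mathbb{B}}(n)\le M$ for all large $n$. First I would reconcile the two statistics: since each line $\mathbb{L}_{[x],[y]}\inn\mathcal{C}(n,\mathbb{B})$ records an unordered pair $\{x,y\}\subset\mathbb{B}$ with $x+y=n$, the quantity $\nu(n,\mathbb{B})$ and the classical representation number $r_{\mathbb{B}}(n)$ agree up to the bookkeeping of ordered versus unordered pairs and the diagonal term. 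Thus $\mathcal{G}_{\mathbb{B}}$ is bounded precisely when $r_{\mathbb{B}}$ is, and we may assume $r_{\mathbb{B}}(n)\le 2M$ for all large $n$ while $\mathbb{B}$ remains a basis of order two.

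Next I would extract the forced density of $\mathbb{B}$. Writing $B(x)=\#\{b\in\mathbb{B}\colon b\le x\}$, a double count of the lines carried by the circles $\mathcal{C}(m,\mathbb{B})$ for $m\le N$ gives
\begin{align}
B(N/2)^2\ll \sum_{m\le N}\mathcal{G}_{\mathbb{B}}(m)\le M(N+1),\nonumber
\end{align}
so that $B(x)\ll\sqrt{x}$; while the basis property $r_{\mathbb{B}}(m)\ge 1$ for large $m$ forces $\sum_{m\le N}r_{\mathbb{B}}(m)\ge N-O(1)$ and hence $B(x)\gg\sqrt{x}$. The counting function of $\mathbb{B}$ is therefore pinned to the single regime $B(x)\asymp\sqrt{x}$.

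I would then pass to the second moment, the additive energy $E(\mathbb{B})=\sum_{m}r_{\mathbb{B}}(m)^2$ signalled by the keyword \emph{energy}, which is the natural variance of the line statistic across the circles $\mathcal{C}(m,\mathbb{B})$. By Cauchy--Schwarz the basis lower bound on the first moment yields $\sum_{m\le N}r_{\mathbb{B}}(m)^2\gg N$, whereas the uniform cap $r_{\mathbb{B}}(m)\le 2M$ together with $B(x)\asymp\sqrt{x}$ yields $\sum_{m\le N}r_{\mathbb{B}}(m)^2\ll M^2N$. The strategy is to sharpen one of these two bounds until they collide, producing the contradiction that no such $M$ can exist and hence that $\mathcal{G}_{\mathbb{B}}$ must be unbounded.

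The hard part, and what I expect to be the genuine obstacle, is that these crude first- and second-moment estimates do \emph{not} collide: a hypothetical bounded basis would have $B(x)\asymp\sqrt{x}$ and $E(\mathbb{B})\asymp N$, a profile simultaneously consistent with every inequality above. Forcing a contradiction therefore requires controlling the \emph{local} distribution of $\mathbb{B}$ — on dyadic intervals, or through the Fourier transform of its indicator — so as to exclude the near-perfect regularity a bounded basis would be compelled to possess. This is exactly the threshold at which elementary counting and the Erd\H{o}s--Fuchs type estimates stop short of the full conjecture, so the decisive step will have to inject genuinely new information about the fine structure of $\mathcal{C}(n,\mathbb{B})$ beyond its mere cardinality.
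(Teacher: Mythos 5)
Your proposal is not a proof, and you say so yourself: you set up the contradiction, correctly pin down $B(x)\asymp\sqrt{x}$ and $\sum_{m\le N}r_{\mathbb{B}}(m)^2\asymp N$ for a hypothetical bounded basis, and then observe that these estimates are mutually consistent, so no contradiction follows. That observation is accurate --- this is precisely the Erd\H{o}s--Fuchs threshold --- but it means the decisive step is absent. Concretely, the gap is that boundedness of $r_{\mathbb{B}}$ together with the basis property produces a self-consistent first- and second-moment profile, and nothing in your bookkeeping excludes it; the ``genuinely new information about the fine structure'' you call for is exactly the content of the conjecture, which remains open.

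For comparison: the paper does not prove this statement either. It is labelled a conjecture, and the author states explicitly that only weaker variants are established under extra hypotheses (a positive-density condition in Section 3, and the hypothesis $\#\{n\le x\mid n\in\mathbb{B}\}\sim x^{1-\epsilon}$ in Theorem \ref{proofconjecture} via the two-fold energy $\mathcal{E}(2,\mathbb{B})$). The closing claim that the full conjecture follows from Theorem \ref{proofconjecture} because a basis of order two satisfies $B(x)\geq\sqrt{x}$ does not hold up: that theorem needs an asymptotic $\sim x^{1-\epsilon}$, not a one-sided bound, and its proof silently upgrades the hypothesis $\mathcal{G}_{\mathbb{B}}(n^2)\geq 1$ to the much stronger, unjustified lower bound $\mathcal{G}_{\mathbb{B}}(n^2)\gg n^{2-2\epsilon}$ inside the displayed sum. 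So the obstacle you identify honestly is the same one the paper's route fails to clear, and your account of where the elementary moment method stops is the more reliable of the two.
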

Using the notion of the density of circles of partition, the notion of ascending, descending, and stationary circles of partition, and the $l$ th fold energy of the circle of partitions, we study the Erd\H{o}s-Tur\'{a}n additive bases conjecture.

\section{The Circle of Partition}\label{sec:cop}

In this section, we introduce the notion of the circle of partition.

\begin{definition}\label{major}
Let $n\in \mathbb{N}$ and $\mathbb{M}\subset \mathbb{N}$. We denote the \emph{Circle of Partition} generated by $n$ with respect to the subset $\mathbb{M}$ by 
\begin{align}
\mathcal{C}(n,\mathbb{M})=\left\{[x]~:~ x,y\in \mathbb{M},n=x+y\right\}.\nonumber
\end{align}
In the following section, will abbreviate this as CoP. We call members of $\mathcal{C}(n,\mathbb{M})$ \emph{points} and denote them by $[x]$. For the special case $\mathbb{M}=\mathbb{N}$, we denote CoP in the short form $\mathcal{C}(n)$. 
\end{definition} 

\begin{definition}\label{axis}
We denote the line that joins the point $[x]$ and $[y]$ by $\mathbb{L}_{[x],[y]}$ and call it an \emph{axis} of the CoP $\mathcal{C}(n,\mathbb{M})$ if and only if $x+y=n$. We say that the axis point $[y]$ is an \emph{axis partner} of the axis point $[x]$ and vice versa. We do not distinguish between the axes $\mathbb{L}_{[x],[y]}$ and $\mathbb{L}_{[y],[x]}$, because it is essentially the same axis. The point $[x]\in \mathcal{C}(n,\mathbb{M})$ such that $2x=n$ is the \emph{center} of the CoP.
If it exists, then it is their only point which is not an axis point. The line joining any two arbitrary points that are not axes partners on the CoP will be called a \emph{chord} of the CoP. The length of the chord joining the points $[x],[y]\in \mathcal{C}(n,\mathbb{M})$, denoted by $\mathcal{D}([x],[y])$, is the measure
\begin{align}
\mathcal{D}([x],[y])=|x-y|.\nonumber
\end{align}
\end{definition}
\bigskip

It is important to emphasize that the \emph{median} of the weights of each co-axis point coincides with the center of the underlying CoP if it exists. That is, given all the axes of CoP $\mathcal{C}(n,\mathbb{M})$
\begin{align}
\mathbb{L}_{[u_1],[v_1]},\mathbb{L}_{[u_2],[v_2]},\ldots, \mathbb{L}_{[u_k],[v_k]}\nonumber
\end{align}
we deduce the following relations
\begin{align}
\frac{u_1+v_1}{2}=\frac{u_2+v_2}{2}=\cdots=\frac{u_k+v_k}{2}=\frac{n}{2}\nonumber
\end{align}
which is equivalent to the following conditions: for any of the pair of axes $\mathbb{L}_{[u_i],[v_i]},\mathbb{L}_{[u_j],[v_j]}$ for $1\leq i,j\leq k$
\begin{align}
\mathcal{D}([u_i],[u_j])=\mathcal{D}([v_i],[v_j])\nonumber
\end{align}
and 
\begin{align}
\mathcal{D}([v_j],[u_i])=\mathcal{D}([u_j],[v_i]).\nonumber
\end{align}
\bigskip

The above framework could be seen as a criterion to determine the plausibility of partitioning in a specified set. Indeed, this feasibility is trivial when we take the set $\mathbb{M}$ to be the set of positive integers $\mathbb{N}$. The situation becomes less trivial and interesting when we take the set $\mathbb{M}$ to be a special subset of natural numbers $\mathbb{N}$. In this case, the corresponding CoP $\mathcal{C}(n,\mathbb{M})$ may not always be non-empty for all $n\in \mathbb{N}$. One archetype of problems of this flavour is the binary Goldbach conjecture, when we take the base set $\mathbb{M}$ to be the set of all prime numbers $\mathbb{P}$. One could imagine the same level of difficulty when we extend our base set to other special subsets of the natural numbers.

\begin{remark}
We observe that a typical CoP may not always have a center. In the case of an absence of a center, we say that the circle has a deleted center. It is easy to observe that the CoP $\mathcal{C}(n)$ contains all points whose weights are positive integers from $1$ to $n-1$ inclusive: 
$$
\mathcal{C}(n)=\left \{[x]\mid~x\in \mathbb{N},x<n\right \}.
$$ 
Therefore, CoP $\mathcal{C}(n)$ has $\left \lfloor \frac{n-1}{2}\right \rfloor$ different axes.
\end{remark}
\bigskip

In the sequel, we will denote the assignment of an axis $\mathbb{L}_{[x],[y]}$ to a CoP $\mathcal{C}(n,\mathbb{M})$ by
$$
\mathbb{L}_{[x],[y]}\inn \mathcal{C}(n,\mathbb{M})
$$
which means
$$
[x],[y]\in \mathcal{C}(n,\mathbb{M}) \quad \text{and} \quad x+y=n
$$
and the number of axes of a CoP by
\begin{align}
\nu(n,\mathbb{M}):=\#\lbrace\mathbb{L}_{[x],[y]}\inn \mathcal{C}(n,\mathbb{M})\rbrace.\nonumber
\end{align}
Furthermore, we let 
\begin{align}
\mathbb{N}_n=\left \{m\in \mathbb{N}~:~m\leq n\right \}\nonumber
\end{align}
be the sequence of the first $n$ positive integers. Furthermore, we will denote the \emph{weight} of the point $[x]$ by
\begin{align}
\Vert[x]\Vert:=x\nonumber
\end{align}
 and the weight set of points in CoP $\mathcal{C}(n,\mathbb{M})$ by $||\mathcal{C}(n,\mathbb{M})||$.
\bigskip

\begin{proposition}\label{unique}
Each axis is uniquely determined by points $[x]\in \mathcal{C}(n,\mathbb{M})$. 
\end{proposition}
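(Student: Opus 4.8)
The plan is to read off the uniqueness directly from the defining relation of an axis given in Definition~\ref{axis}, namely that $\mathbb{L}_{[x],[y]}$ is an axis of $\mathcal{C}(n,\mathbb{M})$ exactly when $x+y=n$. The assertion I want to establish is that once a point $[x]\in\mathcal{C}(n,\mathbb{M})$ is fixed, there is at most one axis passing through it, so that any axis is pinned down by either of its endpoints.

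First I would fix an arbitrary point $[x]\in\mathcal{C}(n,\mathbb{M})$ and suppose, towards the uniqueness, that it lies on two axes $\mathbb{L}_{[x],[y]}$ and $\mathbb{L}_{[x],[z]}$. By Definition~\ref{axis} this yields the two relations $x+y=n$ and $x+z=n$, and subtracting them gives $y=z$. Since the weight $\Vert[\,\cdot\,]\Vert$ determines a point uniquely, I conclude $[y]=[z]$, and invoking the stated convention $\mathbb{L}_{[x],[y]}=\mathbb{L}_{[y],[x]}$ the two axes coincide. Equivalently, given $[x]$ the partner weight is forced to equal $n-x$, so the only candidate axis through $[x]$ is $\mathbb{L}_{[x],[n-x]}$; this is the whole content of the statement.

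The one point that needs care---and essentially the only place where the argument is not purely formal---is the center. If $2x=n$, then the forced partner weight $n-x$ coincides with $x$ itself, so $[x]$ would have to be its own partner; but this is exactly the center of the CoP, which by Definition~\ref{axis} is the unique point that is not an axis point. I would therefore frame the proposition for axis points $[x]$, excluding the center, so that $n-x\neq x$ and the two endpoints of each axis are genuinely distinct. With this caveat in place the uniqueness follows immediately, and I do not anticipate any further obstacle.
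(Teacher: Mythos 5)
Your argument is the same as the paper's: assume two axes $\mathbb{L}_{[x],[y]}$ and $\mathbb{L}_{[x],[z]}$ through the same point, deduce $x+y=n=x+z$ from Definition~\ref{axis}, and conclude $y=z$. Your additional remark about excluding the center is a sensible precaution the paper omits, but it does not change the substance of the proof.
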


\begin{proof}
Let $\mathbb{L}_{[x],[y]}$ be an axis of $\mathcal{C}(n,\mathbb{M})$. Suppose that $\mathbb{L}_{[x],[z]}$ is also an axis with $z\neq y$. It implies $n=x+y=x+z$ and therefore $y=z$. This violates property $z\neq y$ of the chosen axes.
\end{proof}

\begin{corollary}\label{partner}
Each point of a CoP $\mathcal{C}(n,\mathbb{M})$ has exactly one axis partner.
\end{corollary}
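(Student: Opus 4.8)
The plan is to read off the existence of an axis partner directly from the definition of the CoP and then invoke Proposition \ref{unique} for its uniqueness, so that the phrase ``exactly one'' splits cleanly into an existence half and a uniqueness half.

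For the existence half, I would fix an arbitrary point $[x] \in \mathcal{C}(n,\mathbb{M})$ and unpack Definition \ref{major}. Membership of $[x]$ in the CoP means precisely that there exist $x,y \in \mathbb{M}$ with $x + y = n$. Writing $y = n - x$, this partner weight satisfies $y \in \mathbb{M}$, and the symmetric pair $(y,x)$ witnesses $y + x = n$, so that $[y] \in \mathcal{C}(n,\mathbb{M})$ as well. By Definition \ref{axis} the line $\mathbb{L}_{[x],[y]}$ is then an axis, and hence $[x]$ possesses at least one axis partner.

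For the uniqueness half there is essentially nothing left to do, since Proposition \ref{unique} already asserts that an axis is determined by either of its points. Concretely, if $[y]$ and $[z]$ were both axis partners of $[x]$, then $x + y = n = x + z$ forces $y = z$, so $[x]$ has no more than one partner; combining the two halves yields exactly one. The only subtlety I would flag is the center: if $2x = n$ admits a point $[x] \in \mathcal{C}(n,\mathbb{M})$, then its would-be partner is itself, and Definition \ref{axis} explicitly excludes the center from the axis points, so the statement is to be read as ranging over the genuine axis points with the center (when present) as the lone exception. This is the one place where the clean two-halves argument needs a caveat, but it poses no real obstacle.
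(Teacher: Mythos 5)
Your proof is correct and follows essentially the same route as the paper's: existence is read off from Definition \ref{major} (the paper phrases this step as a contradiction, you argue it directly, which amounts to the same thing) and uniqueness is delegated to Proposition \ref{unique}. Your caveat about the center is a genuine refinement rather than a flaw: the paper's own proof silently ignores that a point $[x]$ with $2x=n$ is excluded from the axis points by Definition \ref{axis}, so the corollary as literally stated needs exactly the qualification you supply.
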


\begin{proof}
Let $[x]\in \mathcal{C}(n,\mathbb{M})$ be a point without an axis partner. For every point $[y]\neq [x]$, we get
$$
\Vert[x]\Vert+\Vert[y]\Vert\neq n.
$$
This is inconsistent with $[x]\in \mathcal{C}(n,\mathbb{M})$. Due to Proposition \ref{unique} the case of more than one axis partner is impossible.
\end{proof}
\bigskip

\section{The Density of Points on the Circle of Partition}\label{sec:density}

In this section, we introduce the notion of the density of points on CoP $\mathcal{C}(n,\mathbb{M})$ for $\mathbb{M}\subseteq \mathbb{N}$. We use this framework to study the Erd\H{o}s-Tur\'{a}n additive bases conjecture.

\begin{definition}
Let $\mathbb{H}\subset\mathbb{N}$. We denote the density of $\mathbb{H}$ by 
$$
\mathcal{D}\left(\mathbb{H}\right)=\lim_{n\rightarrow\infty}
\frac{\vert\mathbb{H}\cap \mathbb{N}_n\vert}{n}
$$
if the limit exists and it is finite.
\end{definition}

\begin{definition}
Let $\mathbb{H},\mathbb{M}\subset \mathbb{N}$ and $\mathcal{C}(n,\mathbb{M})$ be CoP with $n\in \mathbb{N}$. We denote the density of points $[x]\in \mathcal{C}(n,\mathbb{M})$ such that $x\in \mathbb{H}$ by
\begin{align}
\mathcal{D}\left(\mathbb{H}_{\mathcal{C}(\infty,\mathbb{M})}\right)=\lim \limits_{n\longrightarrow \infty}\frac{\# \lbrace\mathbb{L}_{[x],[y]} \inn \mathcal{C}(n,\mathbb{M})|~ \{x,y\} \cap \mathbb{H}\neq \emptyset \rbrace}{ \nu(n,\mathbb{M})}\nonumber
\end{align}
if the limit exists and it is finite.
\end{definition}
\bigskip

\begin{proposition}\label{propertydensity}
Let $\mathbb{H}\subset \mathbb{M}$ with $\mathbb{M}\subseteq \mathbb{N}$. The following properties hold:
\begin{enumerate}
    \item [(i)] $\mathcal{D}(\mathbb{M}_{\mathcal{C}(\infty,\mathbb{M})})=1$ and $\mathcal{D}(\mathbb{H}_{\mathcal{C}(\infty,\mathbb{M})})\leq 1$.
    \bigskip
    
    \item [(ii)] $1-\lim \limits_{n\longrightarrow \infty}\dfrac{\nu(n,\mathbb{M}\setminus\mathbb{H})}{\nu(n,\mathbb{M})}=\mathcal{D}(\mathbb{H}_{\mathcal{C}(\infty,\mathbb{M})})$.
    \bigskip
    
    \item [(iii)] If $|\mathbb{H}|<\infty$ then $\mathcal{D}(\mathbb{H}_{\mathcal{C}(\infty,\mathbb{M})})=0$.
\end{enumerate}
\end{proposition}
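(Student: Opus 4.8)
The plan is to treat the three parts separately, in each case reducing to an exact finite-$n$ counting identity and then passing to the limit. Throughout I would write $A(n) := \#\{\mathbb{L}_{[x],[y]} \inn \mathcal{C}(n,\mathbb{M}) \mid \{x,y\}\cap \mathbb{H} \neq \emptyset\}$ for the numerator in the definition of the density, so that $\mathcal{D}(\mathbb{H}_{\mathcal{C}(\infty,\mathbb{M})}) = \lim_{n\to\infty} A(n)/\nu(n,\mathbb{M})$.

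For (i), I would observe that every axis $\mathbb{L}_{[x],[y]} \inn \mathcal{C}(n,\mathbb{M})$ has, by Definition \ref{major}, both weights $x,y \in \mathbb{M}$, so the condition $\{x,y\}\cap \mathbb{M} \neq \emptyset$ is satisfied by every axis; hence taking $\mathbb{H} = \mathbb{M}$ gives $A(n) = \nu(n,\mathbb{M})$ for every $n$, so the ratio is identically $1$ and its limit is $1$. For a general $\mathbb{H} \subset \mathbb{M}$ the family counted by $A(n)$ is a subfamily of all axes, whence $A(n) \le \nu(n,\mathbb{M})$ for every $n$; dividing and taking the limit gives $\mathcal{D}(\mathbb{H}_{\mathcal{C}(\infty,\mathbb{M})}) \le 1$.

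For (ii), the key is a clean partition of the axes of $\mathcal{C}(n,\mathbb{M})$. The axes whose endpoints avoid $\mathbb{H}$ are exactly the axes $\mathbb{L}_{[x],[y]}$ with $x,y \in \mathbb{M}\setminus\mathbb{H}$ and $x+y=n$, which is precisely the set of axes of $\mathcal{C}(n,\mathbb{M}\setminus\mathbb{H})$ (one inclusion uses $\mathbb{M}\setminus\mathbb{H}\subseteq\mathbb{M}$, the other is immediate from the definitions). Hence $A(n) + \nu(n,\mathbb{M}\setminus\mathbb{H}) = \nu(n,\mathbb{M})$ holds exactly for every $n$. Dividing through by $\nu(n,\mathbb{M})$ and using that $\mathcal{D}(\mathbb{H}_{\mathcal{C}(\infty,\mathbb{M})})=\lim_{n\to\infty} A(n)/\nu(n,\mathbb{M})$ exists by hypothesis, the complementary ratio converges to $1 - \mathcal{D}(\mathbb{H}_{\mathcal{C}(\infty,\mathbb{M})})$, which rearranges to the stated identity.

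For (iii), I would bound the numerator uniformly. By Corollary \ref{partner} each point of a CoP has exactly one axis partner, so for each fixed $n$ a given $h \in \mathbb{H}$ occurs as an endpoint-weight of at most one axis of $\mathcal{C}(n,\mathbb{M})$; consequently $A(n) \le |\mathbb{H}|$ for every $n$. Since $|\mathbb{H}| < \infty$ this bound is a fixed constant, so $A(n)/\nu(n,\mathbb{M}) \to 0$ as soon as $\nu(n,\mathbb{M}) \to \infty$; equivalently one can run this through (ii), noting that deleting the finitely many elements of $\mathbb{H}$ alters the axis count by at most $|\mathbb{H}|$, so $\nu(n,\mathbb{M}\setminus\mathbb{H})/\nu(n,\mathbb{M}) \to 1$ and $\mathcal{D}(\mathbb{H}_{\mathcal{C}(\infty,\mathbb{M})})=0$. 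I expect the genuine obstacle of the whole proposition to lie precisely in this last requirement, namely that the denominator $\nu(n,\mathbb{M})$ grows without bound: this must be supplied as a hypothesis on the sets under study, since if $\nu(n,\mathbb{M})$ stayed bounded the ratio would not be forced to vanish, and it is not a formal consequence of the definitions alone.
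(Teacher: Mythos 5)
Your argument is correct and, for part (ii) — the only part the paper actually writes out — it rests on the same idea: partitioning the axes of $\mathcal{C}(n,\mathbb{M})$ according to whether their endpoints meet $\mathbb{H}$, so that $A(n)+\nu(n,\mathbb{M}\setminus\mathbb{H})=\nu(n,\mathbb{M})$ exactly (the paper further splits $A(n)$ into the mixed axes and $\nu(n,\mathbb{H})$, but then splits the limit of the sum into a sum of limits, which tacitly assumes each summand converges; your version, which divides the exact identity and invokes only the assumed existence of $\mathcal{D}(\mathbb{H}_{\mathcal{C}(\infty,\mathbb{M})})$, is the cleaner reading). For (i) and (iii) the paper offers no argument, and your bound $A(n)\leq|\mathbb{H}|$ via Corollary \ref{partner} is the right one; your closing observation is also well taken — (iii) genuinely needs $\nu(n,\mathbb{M})\to\infty$ (or at least unboundedness along the relevant sequence), which is nowhere among the stated hypotheses, so you have identified a real gap in the proposition as formulated rather than in your own proof.
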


\begin{proof}
It is easy to see that \textbf{Property} $(i)$ and \textbf{Property} $(iii)$ are easy consequences of the definition of the density of points on the CoP $\mathcal{C}(n,\mathbb{M})$. We establish \textbf{Property} $(ii)$, which is the less obvious case. We observe, by the uniqueness of the axes of CoPs, that we can write
\begin{align*}
    1&=\lim \limits_{n\longrightarrow \infty}\frac{\nu(n,\mathbb{M})}{\nu(n,\mathbb{M})}\\
    &=\lim \limits_{n\longrightarrow \infty}\frac{\# \lbrace\mathbb{L}_{[x],[y]}\inn \mathcal{C}(n,\mathbb{M})~|~ x\in \mathbb{H}~,y\in \mathbb{M}\setminus \mathbb{H}\rbrace}{\nu(n,\mathbb{M})}\\
    &+\lim \limits_{n\longrightarrow \infty}\frac{\nu(n,\mathbb{H})}{\nu(n,\mathbb{M})}
    +\lim \limits_{n\longrightarrow \infty}\frac{\nu(n,\mathbb{M}\setminus\mathbb{H})}{\nu(n,\mathbb{M})}\\
    &=\mathcal{D}(\mathbb{H}_{\mathcal{C}(\infty,\mathbb{M})})
    +\lim \limits_{n\longrightarrow \infty}\frac{\nu(n,\mathbb{M}\setminus\mathbb{H})}{\nu(n,\mathbb{M})}
\end{align*}
and $(ii)$ follows immediately.
\end{proof}

\begin{proposition}\label{inequality}
Let $\mathcal{C}(n)$ with $n\in \mathbb{N}$ be a CoP and $\mathbb{H}\subset \mathbb{N}$. The following inequality holds: 
\begin{align}
\mathcal{D}(\mathbb{H})=\lim \limits_{n\longrightarrow \infty}\frac{\left \lfloor \frac{|\mathbb{H}\cap \mathbb{N}_n|}{2}\right \rfloor}{\left \lfloor \frac{n-1}{2}\right \rfloor}\leq \mathcal{D}(\mathbb{H}_{\mathcal{C}(\infty)})\leq \lim \limits_{n\longrightarrow \infty}\frac{|\mathbb{H}\cap \mathbb{N}_n|}{\left \lfloor \frac{n-1}{2}\right \rfloor}=2\mathcal{D}(\mathbb{H}).\nonumber
\end{align}
\end{proposition}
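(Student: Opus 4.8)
The plan is to reduce the statement to a purely combinatorial two-sided estimate on the quantity defining $\mathcal{D}(\mathbb{H}_{\mathcal{C}(\infty)})$ and then to squeeze. First I would record the fact, noted earlier, that for the full base set $\mathbb{M}=\mathbb{N}$ one has $\nu(n)=\lfloor\frac{n-1}{2}\rfloor$ and that $\mathcal{C}(n)$ consists of exactly the points $[x]$ with $1\le x\le n-1$. Writing
\begin{align}
N_n:=\#\lbrace \mathbb{L}_{[x],[y]}\inn\mathcal{C}(n)\mid \{x,y\}\cap\mathbb{H}\neq\emptyset\rbrace,\nonumber
\end{align}
the object to be bounded is $\mathcal{D}(\mathbb{H}_{\mathcal{C}(\infty)})=\lim_{n\to\infty}N_n/\lfloor\frac{n-1}{2}\rfloor$. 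The two outer equalities are then routine: since $\lfloor m/2\rfloor=\frac{m}{2}+O(1)$ and $\lfloor\frac{n-1}{2}\rfloor=\frac{n}{2}+O(1)$, both $\frac{\lfloor|\mathbb{H}\cap\mathbb{N}_n|/2\rfloor}{\lfloor(n-1)/2\rfloor}$ and $\frac{|\mathbb{H}\cap\mathbb{N}_n|}{\lfloor(n-1)/2\rfloor}$ have the same limits as $\frac{|\mathbb{H}\cap\mathbb{N}_n|}{n}$ and $2\frac{|\mathbb{H}\cap\mathbb{N}_n|}{n}$, namely $\mathcal{D}(\mathbb{H})$ and $2\mathcal{D}(\mathbb{H})$.

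The heart of the argument is the finite-$n$ sandwich
\begin{align}
\left\lfloor\frac{|T_n|}{2}\right\rfloor\le N_n\le |T_n|,\nonumber
\end{align}
where $T_n$ denotes the set of weights $x\in\mathbb{H}$ with $1\le x\le n-1$ and $2x\neq n$, that is, the $\mathbb{H}$-points of $\mathcal{C}(n)$ that are genuine axis points (the center $x=n/2$, if present, is excluded by Definition \ref{axis}). I would prove this by viewing the axes as the two-element orbits of the involution $x\mapsto n-x$ on $\{1,\dots,n-1\}$; by Corollary \ref{partner} every point of $T_n$ lies on exactly one axis, and distinct axes are disjoint as point sets. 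An axis is counted by $N_n$ precisely when it meets $T_n$, and such an axis absorbs either one or two elements of $T_n$. Since the counted axes are disjoint and together exhaust $T_n$, summing their contributions gives $N_n\le |T_n|\le 2N_n$; as $N_n$ is an integer the lower bound sharpens to $N_n\ge\lceil|T_n|/2\rceil\ge\lfloor|T_n|/2\rfloor$.

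Finally I would reconcile $|T_n|$ with $|\mathbb{H}\cap\mathbb{N}_n|$: the two differ by at most the contributions of $n$ itself and of the center $n/2$, hence by $O(1)$, which is washed out after dividing by $\lfloor\frac{n-1}{2}\rfloor\to\infty$. Dividing the sandwich through by $\nu(n)=\lfloor\frac{n-1}{2}\rfloor$ and letting $n\to\infty$, the left and right members converge to $\mathcal{D}(\mathbb{H})$ and $2\mathcal{D}(\mathbb{H})$ by the first paragraph, pinning the middle term into the asserted interval.

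I expect the main obstacle to be book-keeping rather than conceptual: isolating the factor $2$ and the floor in the lower bound rests on the observation that a single axis can absorb two distinct elements of $\mathbb{H}$, and one must track the boundary exceptions (the center $n/2$ and the endpoint $n$) carefully enough to confirm they are negligible in the limit. A secondary point worth flagging is that the statement tacitly presumes the middle limit exists; if one prefers not to assume this, the same sandwich yields the cleaner reading that the $\liminf$ and $\limsup$ of $N_n/\lfloor\frac{n-1}{2}\rfloor$ both lie in $[\mathcal{D}(\mathbb{H}),2\mathcal{D}(\mathbb{H})]$.
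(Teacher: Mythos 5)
Your proof is correct and rests on the same counting principle as the paper's: an axis counted in the numerator meets $\mathbb{H}$ in either one or two points, so that count is squeezed between $\left\lfloor |\mathbb{H}\cap\mathbb{N}_n|/2\right\rfloor$ and $|\mathbb{H}\cap\mathbb{N}_n|$. If anything yours is the more complete version: the paper only evaluates the count in the two extremal configurations (every $\mathbb{H}$-point paired with another on a common axis, respectively no two on a common axis) and tacitly asserts that the general case interpolates, whereas your double count over the disjoint orbits of the involution $x\mapsto n-x$ establishes the finite-$n$ sandwich uniformly in the configuration, and you additionally track the boundary terms (the weight $n$ and the center $n/2$) and flag the tacit existence-of-limit assumptions that the paper passes over.
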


\begin{proof}
The upper bound is obtained from a configuration in which there are no two points $[x],[y]\in \mathcal{C}(n)$ such that $x,y\in \mathbb{H}$ lie on the same axis of the CoP. That is, by the uniqueness of the axes of CoPs with $\nu(n,\mathbb{H})=0$, we can write
\begin{align}
   \# \left \{\mathbb{L}_{[x],[y]}\in \mathcal{C}(n)|~\{x,y\}\cap \mathbb{H}\neq \emptyset \right \}&=\nu(n,\mathbb{H})+\# \left\{\mathbb{L}_{[x],[y]}\in \mathcal{C}(n)|~x\in \mathbb{H},~y\in \mathbb{N}\setminus \mathbb{H}\right \} \nonumber \\&=\# \left\{\mathbb{L}_{[x],[y]}\in \mathcal{C}(n)|~x\in \mathbb{H},~y\in \mathbb{N}\setminus \mathbb{H}\right \} \nonumber \\&=|\mathbb{H}\cap \mathbb{N}_n|.\nonumber
\end{align}
However, the lower bound follows from a configuration in which every two points $[x],[y]\in \mathcal{C}(n)$ with $x,y\in \mathbb{H}$ are joined by an axis of the CoP. That is, by the uniqueness of the axis of CoPs with 
$$
\# \left \{\mathbb{L}_{[x],[y]}\in \mathcal{C}(n)~|~x\in \mathbb{H},~y\in \mathbb{N}\setminus \mathbb{H}\right\}=0
$$ 
we can write 
\begin{align}
    \# \left \{\mathbb{L}_{[x],[y]}\in \mathcal{C}(n)|~\{x,y\}\cap \mathbb{H}\neq \emptyset \right \}&=\nu(n,\mathbb{H})\nonumber \\&=\left \lfloor \frac{|\mathbb{H}\cap \mathbb{N}_n|}{2}\right \rfloor.\nonumber
\end{align}
\end{proof}

\begin{remark}
In the following, we will prove some weaker version of the conjecture by imposing some suitable conditions. The result is encapsulated in the following theorem.
\end{remark}

\begin{theorem}
Let $\mathbb{B}\subset \mathbb{N}$ with 
\begin{align}
\lim \limits_{n\longrightarrow \infty}\frac{|\mathbb{B}\cap \mathbb{N}_n|}{n}>0\nonumber 
\end{align}
such that
\begin{align}
    \# \left \{\mathbb{L}_{[x],[y]}\inn \mathcal{C}(n)|~x\in \mathbb{N}\setminus \mathbb{B},~y\in \mathbb{B}\right\}\leq \nu(n,\mathbb{B}).\nonumber
\end{align}
If $\mathcal{G}_{\mathbb{B}}(n)=\nu(n,\mathbb{B})>0$ for all sufficiently large values of $n$, then 
\begin{align}
    \limsup \limits_{n\longrightarrow \infty} \mathcal{G}_{\mathbb{B}}(n)=\infty.\nonumber
\end{align}
\end{theorem}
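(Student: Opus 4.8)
The plan is to prove the stronger statement that $\nu(n,\mathbb{B})\to\infty$ as $n\to\infty$, which certainly forces the $\limsup$ to be infinite. The whole argument rests on a single counting identity obtained by sorting the points of the full CoP $\mathcal{C}(n)$ according to whether or not their weights lie in $\mathbb{B}$.

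First I would fix $n$ and recall from the earlier remark that $\mathcal{C}(n)=\{[x]\mid x\in\mathbb{N},\,x<n\}$, so the weights of its points are exactly the integers $1,2,\ldots,n-1$; hence the number of points of $\mathcal{C}(n)$ whose weight lies in $\mathbb{B}$ equals $|\mathbb{B}\cap\mathbb{N}_{n-1}|$. Next I would classify each such point by the axis on which it sits. By Corollary \ref{partner} every non-central point has exactly one axis partner, so the points with weight in $\mathbb{B}$ fall into three groups: those on an axis both of whose endpoints lie in $\mathbb{B}$ (there are $\nu(n,\mathbb{B})$ such axes, each carrying two of our points), those on a mixed axis with exactly one endpoint in $\mathbb{B}$ (counted by the hypothesis quantity, which I denote $M(n)$, each carrying one point), and possibly the center $[n/2]$ when $n$ is even with $n/2\in\mathbb{B}$. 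This produces the identity
\begin{align}
|\mathbb{B}\cap\mathbb{N}_{n-1}|=2\nu(n,\mathbb{B})+M(n)+\varepsilon_n,\nonumber
\end{align}
where $\varepsilon_n\in\{0,1\}$ records the central point.

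The second step is to feed in the hypothesis $M(n)\leq\nu(n,\mathbb{B})$, which converts the identity into the bound $|\mathbb{B}\cap\mathbb{N}_{n-1}|\leq 3\nu(n,\mathbb{B})+1$, equivalently $\nu(n,\mathbb{B})\geq\tfrac{1}{3}\bigl(|\mathbb{B}\cap\mathbb{N}_{n-1}|-1\bigr)$. Finally, the positive-density hypothesis $\lim_{n\to\infty}|\mathbb{B}\cap\mathbb{N}_n|/n>0$ forces $|\mathbb{B}\cap\mathbb{N}_{n-1}|\to\infty$, so this lower bound drives $\nu(n,\mathbb{B})$ to infinity and we are done.

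I do not expect a deep obstacle here: the substance of the proof is the bookkeeping behind the counting identity. The one place needing genuine care is the center, namely separating the parity cases of $n$ and confirming that the correction $\varepsilon_n$ stays bounded (by $1$) and is therefore negligible against the linear growth of $|\mathbb{B}\cap\mathbb{N}_{n-1}|$. A secondary check is that $M(n)$ counts precisely the axes with one endpoint in $\mathbb{B}$ and one outside, so that each point with weight in $\mathbb{B}$ is accounted for exactly once in the identity; this is exactly where the uniqueness of axis partners (Corollary \ref{partner}) is used.
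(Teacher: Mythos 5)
Your argument is correct, and it is both more direct and stronger than the paper's. The paper argues by contradiction: assuming $\limsup_n \nu(n,\mathbb{B})<\infty$, it computes the density $\mathcal{D}(\mathbb{B}_{\mathcal{C}(\infty)})$ by splitting the axes of $\mathcal{C}(n)$ meeting $\mathbb{B}$ into mixed axes and axes of $\mathcal{C}(n,\mathbb{B})$, bounds this density by $2\lim_n \nu(n,\mathbb{B})/\lfloor\tfrac{n-1}{2}\rfloor=0$ using the hypothesis on the mixed count, and then invokes the lower bound of Proposition \ref{inequality} to conclude $\mathcal{D}(\mathbb{B})=0$, contradicting positive density. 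You use the same underlying decomposition of the points of $\mathcal{C}(n)$ with weight in $\mathbb{B}$, but you keep it as an exact finite identity, $|\mathbb{B}\cap\mathbb{N}_{n-1}|=2\nu(n,\mathbb{B})+M(n)+\varepsilon_n$, rather than passing to normalized limits. This buys you three things: a quantitative bound $\nu(n,\mathbb{B})\geq\tfrac{1}{3}(|\mathbb{B}\cap\mathbb{N}_{n-1}|-1)\gg n$, hence the full limit $\nu(n,\mathbb{B})\to\infty$ rather than merely an infinite $\limsup$; independence from the hypothesis that $\mathcal{G}_{\mathbb{B}}(n)>0$ for large $n$, which your argument never uses; and independence from Proposition \ref{inequality}, whose stated two-sided bound is justified in the paper only for two extreme configurations rather than in general, so avoiding it is a genuine gain in rigor. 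Your attention to the center point and the parity correction $\varepsilon_n\in\{0,1\}$ is exactly the bookkeeping the paper's limit computation glosses over, and it is harmless against the linear growth of $|\mathbb{B}\cap\mathbb{N}_{n-1}|$.
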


\begin{proof}
Suppose $\mathbb{B}\subset \mathbb{N}$ and let $\mathcal{G}_{\mathbb{B}}(n)>0$ for all sufficiently large values of $n$. Suppose that
\begin{align}
  \limsup \limits_{n\longrightarrow \infty} \mathcal{G}_{\mathbb{B}}(n)<\infty.\nonumber  
\end{align}
Consider CoPs $\mathcal{C}(n,\mathbb{B})$. By the uniqueness of axes of CoPs, we can  compute the density of points $[x]\in \mathcal{C}(n)$ with $||[x]||\in \mathbb{B}$ in the following way:
\begin{align*}
\mathcal{D}(\mathbb{B}_{\mathcal{C}(\infty)})
&=\lim \limits_{n\longrightarrow \infty}\frac{\# \lbrace\mathbb{L}_{[x],[y]}\inn \mathcal{C}(n)|\lbrace x,y\rbrace\cap \mathbb{B}\neq \emptyset\rbrace}{\left \lfloor \frac{n-1}{2}\right \rfloor}\\
&=\lim \limits_{n\longrightarrow \infty}\frac{\# \lbrace\mathbb{L}_{[x],[y]}\inn \mathcal{C}(n)|~x\in \mathbb{N}\setminus \mathbb{B},~y\in \mathbb{B}\rbrace}{\left \lfloor \frac{n-1}{2}\right \rfloor}
+\lim \limits_{n\longrightarrow \infty}\frac{\nu(n,\mathbb{B})}{\left \lfloor \frac{n-1}{2}\right \rfloor}\\
&\leq 2\lim \limits_{n\longrightarrow \infty}\frac{\nu(n,\mathbb{B})}{\left \lfloor \frac{n-1}{2}\right \rfloor}\\
&=0
\end{align*}
using the earlier assumption. Applying Proposition \ref{inequality}, we find that
\begin{align}
    \lim \limits_{n\longrightarrow \infty}\frac{\left \lfloor \frac{|\mathbb{B}\cap \mathbb{N}_n|}{2}\right \rfloor}{\left \lfloor \frac{n-1}{2}\right \rfloor}=0.\nonumber
\end{align}
We deduce $\mathcal{D}(\mathbb{B})=0$, contradicting the requirement of the statement.
\end{proof}
\bigskip

\section{Ascending, Descending and Stationary Circles of Partition}\label{sec:ascdesc}

In this section, we introduce the notion of \emph{ascending},  \emph{descending}, and \emph{stationary} CoPs between generators. We exploit this framework to improve the result concerning the Erd\H{o}s-Tur\'{a}n additive bases conjecture in section 2.

\begin{definition}
Let $\mathbb{M}\subset\mathbb{N}$ with $\mathcal{C}(n,\mathbb{M})$ be a CoP. We say that the CoP $\mathcal{C}(n,\mathbb{M})$ is \emph{ascending} from $n$ to the \emph{spot} $m$ if for $n<m$ holds
\begin{align*}
\nu(n,\mathbb{M})<\nu(m,\mathbb{M}).
\end{align*}
Similarly, we say that it is \emph{descending} from $n$ to the \emph{spot} $m$ if for $n<m$, then 
\begin{align*}
\nu(n,\mathbb{M})>\nu(m,\mathbb{M}).
\end{align*}
We say that it is \emph{globally} ascending (resp. descending) if at  $\forall m \in \mathbb{N}$ it is ascending (resp. descending). We say that CoP $\mathcal{C}(n,\mathbb{M})$ is \emph{stationary} from $n$ to the \emph{spot} $m$ if for $n<m$ then 
\begin{align*}
   \nu(n,\mathbb{M})=\nu(m,\mathbb{M}). 
\end{align*}
Similarly, we say that it is \emph{globally stationary} if it is stationary at all spots $m\in \mathbb{N}$. If CoP $\mathcal{C}(n,\mathbb{M})$ is neither globally ascending, descending, nor stationary, then we say that it is \emph{globally oscillatory}.
\end{definition}

\begin{theorem}\label{ascendingspots}
Let $\mathbb{H}\subset \mathbb{N}$ and $\mathcal{C}(n,\mathbb{H})$ be a CoP. If 
\begin{align}
    \lim \limits_{n\longrightarrow \infty}\frac{|\mathbb{H}\cap \mathbb{N}_n|}{n}>0\nonumber
\end{align}
with 
\begin{align}
    \lim \limits_{n\longrightarrow \infty}\frac{|(\mathbb{N}\setminus \mathbb{H})\cap \mathbb{N}_n|}{n}<\frac{1}{2}\lim \limits_{n\longrightarrow \infty}\frac{|\mathbb{H}\cap \mathbb{N}_n|}{n}\nonumber
\end{align}
then $\mathcal{C}(n,\mathbb{H})$ is ascending at infinitely many spots.
\end{theorem}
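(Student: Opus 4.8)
The plan is to show that the two density hypotheses together force $\nu(n,\mathbb{H})\to\infty$, and then to observe that an axis-count tending to infinity must ascend to infinitely many new spots. First I would record the elementary identity $\mathcal{D}(\mathbb{H})+\mathcal{D}(\mathbb{N}\setminus\mathbb{H})=1$, which follows at once from $|\mathbb{H}\cap\mathbb{N}_n|+|(\mathbb{N}\setminus\mathbb{H})\cap\mathbb{N}_n|=n$. Writing $d=\mathcal{D}(\mathbb{H})$ and $d'=\mathcal{D}(\mathbb{N}\setminus\mathbb{H})$, the second hypothesis $d'<\tfrac12 d$ combined with $d+d'=1$ yields $d'<\tfrac13$ and hence $\tfrac12-d'>\tfrac16>0$. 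This strict positivity is precisely the slack the argument will exploit.

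The heart of the proof is a lower bound for $\nu(n,\mathbb{H})$. I would view each axis of $\mathcal{C}(n,\mathbb{H})$ as an axis $\mathbb{L}_{[x],[n-x]}$ of the full CoP $\mathcal{C}(n)$ that survives the restriction to $\mathbb{H}$, namely one with both $x\in\mathbb{H}$ and $n-x\in\mathbb{H}$. The total number of axes of $\mathcal{C}(n)$ is $\lfloor(n-1)/2\rfloor$, and an axis is destroyed exactly when at least one of its two endpoints lies in $\mathbb{N}\setminus\mathbb{H}$. Assigning to each destroyed axis a single bad endpoint, and using that distinct axes have disjoint endpoint pairs (Proposition \ref{unique}), the number of destroyed axes is at most the number of bad endpoints in $\{1,\dots,n-1\}$, that is at most $|(\mathbb{N}\setminus\mathbb{H})\cap\mathbb{N}_{n-1}|$. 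This gives
\begin{align*}
\nu(n,\mathbb{H})\ \ge\ \left\lfloor\frac{n-1}{2}\right\rfloor-\bigl|(\mathbb{N}\setminus\mathbb{H})\cap\mathbb{N}_{n-1}\bigr|.
\end{align*}

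Dividing by $n$ and letting $n\to\infty$, the right-hand side tends to $\tfrac12-d'>0$, so $\nu(n,\mathbb{H})\ge(\tfrac12-d'+o(1))n\to\infty$. It then remains to convert unboundedness into ascending spots: since $\nu(n,\mathbb{H})\to\infty$, for any $n_0$ there is $m>n_0$ with $\nu(m,\mathbb{H})>\nu(n_0,\mathbb{H})$, and iterating this choice produces a strictly increasing sequence $m_1<m_2<\cdots$ with $\nu(m_k,\mathbb{H})<\nu(m_{k+1},\mathbb{H})$, whence $\mathcal{C}(n,\mathbb{H})$ ascends to each of the infinitely many spots $m_{k+1}$. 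The step I expect to require the most care is the combinatorial lower bound: the destroyed axes must be counted so that the estimate reads $\le|(\mathbb{N}\setminus\mathbb{H})\cap\mathbb{N}_{n-1}|$ rather than twice that quantity. Because the surviving margin is only $\tfrac12-d'\ge\tfrac16$, a spurious factor of two would render the bound vacuous, so pinning down this constant is the crux of the whole argument.
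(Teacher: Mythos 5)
Your argument is correct, and it takes a different route from the paper's. The paper argues by contradiction: assuming only finitely many ascending spots, it deduces that $\nu(n,\mathbb{H})/\lfloor (n-1)/2\rfloor \to 0$, then computes the density of points with weight in $\mathbb{H}$ on the full CoP $\mathcal{C}(n)$ by splitting its axes into those lying entirely in $\mathbb{H}$ and the mixed ones, bounds the mixed ones by $|(\mathbb{N}\setminus\mathbb{H})\cap\mathbb{N}_n|$, and finally invokes the lower bound of Proposition \ref{inequality} to obtain $\mathcal{D}(\mathbb{H})\leq 2\,\mathcal{D}(\mathbb{N}\setminus\mathbb{H})$, contradicting the hypothesis. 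Your proof isolates the same combinatorial kernel --- each axis of $\mathcal{C}(n)$ failing to survive in $\mathcal{C}(n,\mathbb{H})$ injects into a distinct element of $(\mathbb{N}\setminus\mathbb{H})\cap\mathbb{N}_{n-1}$, by uniqueness of axis partners --- but deploys it directly as the lower bound $\nu(n,\mathbb{H})\geq \lfloor (n-1)/2\rfloor - |(\mathbb{N}\setminus\mathbb{H})\cap\mathbb{N}_{n-1}|$, which with $d+d'=1$ and $d'<d/2$ gives $\nu(n,\mathbb{H})\geq (\tfrac{1}{2}-d'+o(1))n$ and $\tfrac{1}{2}-d'>\tfrac{1}{6}$. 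This buys you three things: you bypass the density-of-points formalism and Proposition \ref{inequality} entirely; you avoid the contradiction framing (and the paper's implicit assumption that the various limits in its decomposition exist); and you obtain the strictly stronger conclusion $\nu(n,\mathbb{H})\to\infty$ with an explicit linear rate, from which ``ascending at infinitely many spots'' follows trivially. Your closing caution about the factor of two is well placed and correctly resolved: the injection sends each destroyed axis to one bad endpoint, not two, which is exactly what keeps the surviving margin positive.
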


\begin{proof}
Let $\mathcal{C}(n,\mathbb{H})$ be a CoP and assume to the contrary that there are finitely many \emph{spots} at which it is ascending. Let us name and arrange the spots as follows $m_1<m_2<\cdots<m_k$. It implies that 
\begin{align}
\nu(n,\mathbb{H})\geq \nu(m_{k+1},\mathbb{H})\geq \cdots \nu(m_{k+i},\mathbb{H})\geq \cdots \nonumber
\end{align}
for all $i\geq 1$. The upshot is that 
\begin{align}
    \lim \limits_{n\longrightarrow \infty}\frac{\nu(n,\mathbb{H})}{\left \lfloor \frac{n-1}{2}\right \rfloor}=0.\nonumber
\end{align}
By the uniqueness of axes of CoPs, we can compute the density of points with weight in $\mathbb{H}$ on the CoP $\mathcal{C}(n)$ as follows:
\begin{align*}
    \mathcal{D}(\mathbb{H}_{\mathcal{C}(\infty)})&=\lim \limits_{n\longrightarrow \infty}\frac{\#\left \{\mathbb{L}_{[x],[y]}\inn \mathcal{C}(n)|~\{x,y\}\cap \mathbb{H}\neq \emptyset\right \}}{\left \lfloor \frac{n-1}{2}\right \rfloor}\\
    &=\lim \limits_{n\longrightarrow \infty}\frac{\#\left \{\mathbb{L}_{[x],[y]}\inn \mathcal{C}(n)|~x\in  \mathbb{H},~y\in \mathbb{N}\setminus \mathbb{H}\right\}}{\left \lfloor \frac{n-1}{2}\right \rfloor}
    +\lim \limits_{n\longrightarrow \infty}\frac{\nu(n,\mathbb{H})}{\left \lfloor \frac{n-1}{2}\right \rfloor}\\
    &=\lim \limits_{n\longrightarrow \infty}\frac{\#\left \{\mathbb{L}_{[x],[y]}\inn \mathcal{C}(n)|~x\in  \mathbb{H},~y\in \mathbb{N}\setminus \mathbb{H}\right\}}{\left \lfloor \frac{n-1}{2}\right\rfloor}\\
    &\leq \lim \limits_{n\longrightarrow \infty}\frac{|(\mathbb{N}\setminus \mathbb{H})\cap \mathbb{N}_n|}{\left \lfloor \frac{n-1}{2}\right \rfloor}\\
    &\leq 2\lim \limits_{n\longrightarrow \infty}\frac{|(\mathbb{N}\setminus \mathbb{H})\cap \mathbb{N}_n|}{n}.
\end{align*}
Applying Proposition \ref{inequality}, we deduce
\begin{align}
    \lim \limits_{n\longrightarrow \infty}\frac{|\mathbb{H}\cap \mathbb{N}_n|}{n}\leq 2\lim \limits_{n\longrightarrow \infty}\frac{|(\mathbb{N}\setminus \mathbb{H})\cap \mathbb{N}_n|}{n}.\nonumber
\end{align}
This violates the statement requirements.
\end{proof}

\begin{remark}
We can easily deduce from this result another weak variant of Erd\H{o}s-Tur\'{a}n conjecture. Roughly speaking, it purports that very dense sequences sufficiently qualify as an additive base.
\end{remark}

\begin{corollary}
Let $\mathbb{H}\subset \mathbb{N}$ with $\mathcal{D}(\mathbb{H})>0$ such that $\mathcal{D}(\mathbb{N}\setminus \mathbb{H})<\frac{1}{2}\mathcal{D}(\mathbb{H})$. If 
\begin{align}
    r_{\mathbb{H}}(n):=\#\left \{(a,b)\in \mathbb{H}^2|~a+b=n\right\}\nonumber
\end{align}
then $\lim \limits_{n\longrightarrow \infty}r_{\mathbb{H}}(n)=\infty$.
\end{corollary}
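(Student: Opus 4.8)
The plan is to deduce this corollary directly from Theorem~\ref{ascendingspots} by translating between the two statistics $r_{\mathbb{H}}(n)$ and $\nu(n,\mathbb{H})$. First I would observe that the hypotheses $\mathcal{D}(\mathbb{H})>0$ and $\mathcal{D}(\mathbb{N}\setminus\mathbb{H})<\tfrac{1}{2}\mathcal{D}(\mathbb{H})$ are, upon unwinding the definition of density, exactly the two limit inequalities required by Theorem~\ref{ascendingspots} applied with the base set $\mathbb{H}$. So the theorem applies and tells us that $\mathcal{C}(n,\mathbb{H})$ is ascending at infinitely many spots. The remaining work is to show that this ascending-at-infinitely-many-spots conclusion, combined with the density hypotheses, forces $r_{\mathbb{H}}(n)\to\infty$ rather than merely $\limsup r_{\mathbb{H}}(n)=\infty$.

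The key bridge is the relationship between $r_{\mathbb{H}}(n)$ and $\nu(n,\mathbb{H})$. Each unordered axis $\mathbb{L}_{[x],[y]}\inn\mathcal{C}(n,\mathbb{H})$ with $x\neq y$ corresponds to the two ordered pairs $(x,y)$ and $(y,x)$ counted by $r_{\mathbb{H}}(n)$, while a center point (when $2x=n$ with $x\in\mathbb{H}$) contributes the single pair $(x,x)$. Hence $r_{\mathbb{H}}(n)$ and $2\nu(n,\mathbb{H})$ differ by at most a bounded additive constant, so $r_{\mathbb{H}}(n)\to\infty$ if and only if $\nu(n,\mathbb{H})\to\infty$. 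Thus it suffices to prove $\nu(n,\mathbb{H})\to\infty$.

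To upgrade ``ascending at infinitely many spots'' to the genuine limit $\nu(n,\mathbb{H})\to\infty$, I would argue by contradiction in the style of the theorem's own proof. If $\nu(n,\mathbb{H})$ did not tend to infinity, then some subsequence would stay bounded, and in particular $\liminf_{n\to\infty}\nu(n,\mathbb{H})<\infty$. The plan is to show this is incompatible with the density lower bound: by Proposition~\ref{inequality} applied to $\mathbb{H}$, the density $\mathcal{D}(\mathbb{H}_{\mathcal{C}(\infty)})$ is bounded below by $\lim_{n\to\infty}\lfloor\tfrac{|\mathbb{H}\cap\mathbb{N}_n|}{2}\rfloor/\lfloor\tfrac{n-1}{2}\rfloor = \mathcal{D}(\mathbb{H})>0$, and the decomposition of axes used in the proof of Theorem~\ref{ascendingspots} shows that a bounded subsequence for $\nu(n,\mathbb{H})$ would drive this density contribution to zero along that subsequence, contradicting $\mathcal{D}(\mathbb{H})>0$ together with the strict gap condition $\mathcal{D}(\mathbb{N}\setminus\mathbb{H})<\tfrac12\mathcal{D}(\mathbb{H})$ that controls the cross-axes $\{x\in\mathbb{H},\,y\in\mathbb{N}\setminus\mathbb{H}\}$.

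The main obstacle I anticipate is precisely this last upgrade from a $\limsup$ (or from ``infinitely many ascending spots'') to a full limit. Theorem~\ref{ascendingspots} only guarantees infinitely many spots of increase, which a priori is consistent with $\nu(n,\mathbb{H})$ oscillating and returning to small values infinitely often; ruling out such dips requires using the density hypotheses quantitatively rather than qualitatively. The cleanest route will be to establish a lower bound of the form $\nu(n,\mathbb{H})\geq |\mathbb{H}\cap\mathbb{N}_n| - |(\mathbb{N}\setminus\mathbb{H})\cap\mathbb{N}_n|$ directly from the axis decomposition (every weight in $\mathbb{H}\cap\mathbb{N}_n$ either lies on an $\mathbb{H}$-axis or pairs with a weight outside $\mathbb{H}$), and then to note that the gap condition makes the right-hand side tend to infinity; this bypasses the subsequence argument entirely and yields $\nu(n,\mathbb{H})\to\infty$ outright.
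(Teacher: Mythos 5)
The paper offers no proof of this corollary at all --- it is stated immediately after Theorem~\ref{ascendingspots} with no argument --- so there is nothing to match your proposal against except the implicit claim that the corollary follows from that theorem. You correctly identify the real problem with that implicit claim: the theorem's conclusion, that $\mathcal{C}(n,\mathbb{H})$ is ``ascending at infinitely many spots,'' only says (even on the most generous reading of the definition) that $\nu(m,\mathbb{H})$ exceeds $\nu(n,\mathbb{H})$ for infinitely many $m$, which at best yields $\limsup_{n\to\infty}\nu(n,\mathbb{H})=\infty$ and is consistent with $\nu(n,\mathbb{H})$ dipping to $0$ infinitely often; it cannot give the full limit $r_{\mathbb{H}}(n)\to\infty$ asserted in the corollary. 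Your final ``cleanest route'' is the right proof and is what actually establishes the statement: since $r_{\mathbb{H}}(n)=2\nu(n,\mathbb{H})+O(1)$, and every $x\in\mathbb{H}\cap\mathbb{N}_{n-1}$ either has $n-x\in\mathbb{H}$ or $n-x\in\mathbb{N}\setminus\mathbb{H}$, one gets $|\mathbb{H}\cap\mathbb{N}_{n-1}|\leq 2\nu(n,\mathbb{H})+1+|(\mathbb{N}\setminus\mathbb{H})\cap\mathbb{N}_{n-1}|$, and the hypotheses (which force $\mathcal{D}(\mathbb{H})>\tfrac{2}{3}$, since the two densities sum to $1$) make the resulting lower bound on $\nu(n,\mathbb{H})$ tend to infinity. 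One correction: your displayed inequality $\nu(n,\mathbb{H})\geq|\mathbb{H}\cap\mathbb{N}_n|-|(\mathbb{N}\setminus\mathbb{H})\cap\mathbb{N}_n|$ is off by a factor of $2$ (each axis absorbs two weights; already $\mathbb{H}=\mathbb{N}$ violates it, as $\lfloor\tfrac{n-1}{2}\rfloor<n$); the correct bound is $\nu(n,\mathbb{H})\geq\tfrac{1}{2}\bigl(|\mathbb{H}\cap\mathbb{N}_{n-1}|-|(\mathbb{N}\setminus\mathbb{H})\cap\mathbb{N}_{n-1}|\bigr)-1$, which still diverges under the gap condition, so the conclusion is unaffected. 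In short: your direct counting argument is correct (modulo the factor of $2$) and genuinely necessary, because the route through Theorem~\ref{ascendingspots} that the paper implies does not close.
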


\section{The $l^{th}$ Fold Energy of Circles of Partition}\label{sec:energy}

In this section, we introduce and study the notion of the $l^{th}$ fold energy of CoPs and exploit some applications in this context. This notion tends to be more effective and extends very much to sequences that do not necessarily have a positive density.

\begin{definition}
Let $\mathbb{M}\subset \mathbb{N}$ and $\mathcal{C}(n,\mathbb{M})$ be a CoP. We denote the $l^{th}$-fold energy of the CoP $\mathcal{C}(n,\mathbb{M})$ by the measure
\begin{align}
    \mathcal{E}(l,\mathbb{M}):=\sum \limits_{n=3}^{\infty} \frac{\nu(n^l,\mathbb{M})}{\left \lfloor \frac{n^l-1}{2}\right \rfloor}\nonumber
\end{align}
for a fixed $l\in \mathbb{N}$.
\end{definition}
\bigskip

We note that the $l^{th}$-fold energy of a typical CoP $\mathcal{C}(n,\mathbb{M})$ could be infinite or finite. In the latter case, it should have a finite value. To that effect, we state the following proposition.

\begin{proposition}\label{boundedenergy}
Let $\mathbb{J}^l\subset \mathbb{N}$ be the set of all $l^{th}$ powers. We have $\mathcal{E}(l,\mathbb{J}^l)<\infty$ for all $l\geq 3$ and $\mathcal{E}(2,\mathbb{J}^2)=\infty$. 
\end{proposition}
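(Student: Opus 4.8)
The plan is to separate the two regimes $l\geq 3$ and $l=2$, since the arithmetic of $a^l+b^l=n^l$ changes character completely at the exponent $3$. In both cases the first move is the same: unwinding the definitions, an axis $\mathbb{L}_{[x],[y]}\inn\mathcal{C}(n^l,\mathbb{J}^l)$ is precisely an unordered pair of positive $l$-th powers $x=a^l$, $y=b^l$ with $a^l+b^l=n^l$, so that $\nu(n^l,\mathbb{J}^l)$ equals the number of representations of $n^l$ as a sum of two positive $l$-th powers.

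For $l\geq 3$ this is immediate. By Fermat's Last Theorem the equation $a^l+b^l=n^l$ has no solution in positive integers once $l\geq 3$, hence $\nu(n^l,\mathbb{J}^l)=0$ for every $n$. Every term of the defining series then vanishes and $\mathcal{E}(l,\mathbb{J}^l)=0<\infty$, which settles the first half with no analytic input beyond the citation.

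For $l=2$ the same reduction identifies $\nu(n^2,\mathbb{J}^2)$ with $R(n)$, the number of Pythagorean triples of hypotenuse $n$, i.e. the number of unordered pairs of positive squares summing to $n^2$. I would make this concrete through the sum-of-two-squares theorem: with $v_p(n)$ the $p$-adic valuation of $n$, one has $R(n)=\frac{1}{2}\big(\prod_{p\equiv 1\,(4)}(2v_p(n)+1)-1\big)$, so $R$ is essentially the multiplicative function supported on prime factors $\equiv 1\pmod 4$. Since $\lfloor(n^2-1)/2\rfloor\sim n^2/2$, proving $\mathcal{E}(2,\mathbb{J}^2)=\infty$ reduces to proving that $\sum_{n\geq 3}R(n)/n^2$ diverges. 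The two natural routes are to assemble the Dirichlet series $\sum_n R(n)n^{-s}$ as an Euler product from the multiplicativity of $R$ and read off its behaviour at $s=2$, or to estimate the partial sums $\sum_{n\leq N}R(n)$ (the count of Pythagorean triples with hypotenuse at most $N$) and apply Abel summation against the weight $n^{-2}$.

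The main obstacle is that both computations point firmly toward convergence, in tension with the stated divergence. The partial sums obey $\sum_{n\leq N}R(n)\sim cN\log N$ for an explicit $c>0$ (the Dirichlet series $\sum_n R(n)n^{-s}$ carries a double-pole-type singularity at $s=1$), so Abel summation against $n^{-2}$ yields the convergent integral $\int^{\infty}(\log t)/t^2\,dt$; equivalently the Euler product $\prod_{p\not\equiv 1}(1-p^{-2})^{-1}\prod_{p\equiv 1}(1+p^{-2})(1-p^{-2})^{-2}$ is finite at $s=2$, each local factor being $1+O(p^{-2})$ with $\sum_p p^{-2}<\infty$. Hence the honest estimates give $\mathcal{E}(2,\mathbb{J}^2)<\infty$, and the crux of any proof of the statement as worded is to exhibit a mechanism forcing divergence that survives the genuine thinness of Pythagorean hypotenuses, which supply on average only $\sim c\log n$ representations against an $n^2$ denominator. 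I do not see how to circumvent the $N\log N$ barrier, and I would re-examine the normalisation in the definition of $\mathcal{E}$ before attempting the divergence, since the difficulty here appears structural rather than merely technical.
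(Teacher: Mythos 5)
Your reduction of $\nu(n^l,\mathbb{J}^l)$ to the number of representations of $n^l$ as a sum of two positive $l$-th powers is exactly right, and the two halves of your proposal deserve separate verdicts. For $l\geq 3$ your argument is correct but takes a genuinely different route from the paper: you invoke Fermat's Last Theorem to conclude $\nu(n^l,\mathbb{J}^l)=0$ identically, hence $\mathcal{E}(l,\mathbb{J}^l)=0$. The paper avoids FLT entirely: every point of $\mathcal{C}(n^l,\mathbb{J}^l)$ has weight $a^l<n^l$, so there are at most $n-1$ points and thus $\nu(n^l,\mathbb{J}^l)\leq n/2$, giving
\begin{align*}
\mathcal{E}(l,\mathbb{J}^l)\leq\sum_{n=3}^{\infty}\frac{n/2}{\left\lfloor \frac{n^l-1}{2}\right\rfloor}\ll\sum_{n=3}^{\infty}\frac{1}{n^{l-1}}<\infty \qquad (l\geq 3).
\end{align*}
The paper's counting bound is elementary and self-contained; yours yields the sharper conclusion $\mathcal{E}(l,\mathbb{J}^l)=0$ but leans on a very deep theorem where trivial counting suffices.

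For $l=2$ you have not failed to find the proof --- you have found an error in the paper. The paper's own proof treats only $l\geq 3$ and says nothing at all about the assertion $\mathcal{E}(2,\mathbb{J}^2)=\infty$; at $l=2$ the counting bound above merely produces the divergent upper bound $\sum 1/n$, which proves nothing. Your account of what actually happens is correct: $\nu(n^2,\mathbb{J}^2)=R(n)$ is the Pythagorean-hypotenuse count, $\sum_{n\leq N}R(n)\sim cN\log N$ (primitive triples $\sim N/(2\pi)$, summed over dilations), and partial summation against $\left\lfloor (n^2-1)/2\right\rfloor^{-1}\asymp n^{-2}$ gives a \emph{convergent} series; equivalently, the Euler product you wrote down has local factors $1+O(p^{-2})$ and so is finite at $s=2$. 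An even quicker confirmation: $R(n)\leq\frac{1}{2}\prod_{p\equiv 1 \pmod 4}(2v_p(n)+1)\leq\frac{1}{2}d(n^2)=n^{o(1)}$ by the divisor bound, so the terms of the energy series are $n^{o(1)-2}$ and convergence is immediate. Hence $\mathcal{E}(2,\mathbb{J}^2)<\infty$ and the second assertion of the proposition is false as stated. This is not a peripheral defect: the squares are exactly a set with counting function $x^{1-\epsilon}$ for $\epsilon=\frac{1}{2}$, so your computation also undercuts the key step $\mathcal{G}_{\mathbb{B}}(n^2)\gg \left\lfloor (n^{2-2\epsilon}-1)/2\right\rfloor$ in the proof of Theorem \ref{proofconjecture} (the squares fail that theorem's positivity hypothesis, so they are not a formal counterexample to it, but they show that a counting-function hypothesis alone cannot force the two-fold energy to diverge).
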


\begin{proof}
Let $l\geq 3$ be fixed and consider the CoP $\mathcal{C}(n^l,\mathbb{J}^l)$, where $\mathbb{J}^l\subset \mathbb{N}$ is the set of all $l^{th}$ powers. We deduce from the configuration of CoPs the following inequality
\begin{align}
    \mathcal{E}(l,\mathbb{J}^l)&=\sum \limits_{n=3}^{\infty}\frac{\nu(n^l,\mathbb{J}^l)}{\left \lfloor \frac{n^l-1}{2}\right \rfloor}\nonumber\\
    &\leq \sum \limits_{n=3}^{\infty}\frac{\frac{n}{2}}{\left \lfloor \frac{n^l-1}{2}\right \rfloor}\nonumber\\
    & \ll \sum \limits_{n=3}^{\infty}\frac{1}{n^{l-1}}<\infty \nonumber
\end{align}
for all $l\geq 3$.
\end{proof}

\begin{proposition}\label{spotenergy}
Let $\mathbb{M}\subset \mathbb{N}$ and $\mathcal{C}(n,\mathbb{M})$ be a CoP. If $\mathcal{E}(l,\mathbb{M})=\infty$ for $l\geq 2$, then $\mathcal{C}(n^l,\mathbb{M})$ ascends at infinitely many spots.
\end{proposition}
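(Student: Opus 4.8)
The plan is to argue by contraposition, mirroring the bookkeeping in the proof of Theorem \ref{ascendingspots} but feeding the resulting monotonicity into the convergence of a $p$-series. First I would assume to the contrary that the CoP $\mathcal{C}(n^l,\mathbb{M})$ is ascending at only finitely many spots. Naming and arranging these spots as $m_1<m_2<\cdots<m_k$ exactly as in Theorem \ref{ascendingspots}, it follows that beyond the last such spot the sequence $n\mapsto \nu(n^l,\mathbb{M})$ is non-increasing, so that $\nu(n^l,\mathbb{M})\geq \nu((n+1)^l,\mathbb{M})\geq \cdots$ for all sufficiently large $n$.

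The next step is to extract boundedness. Since $\nu(n^l,\mathbb{M})$ is a non-negative integer for every $n$ and the tail of the sequence is non-increasing, the whole sequence is bounded above by some constant $C=C(\mathbb{M},l)$, namely the maximum of the finitely many initial terms together with the first term of the non-increasing tail. Hence $\nu(n^l,\mathbb{M})\leq C$ for all $n\geq 3$.

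With this bound in hand I would estimate the energy termwise. Using $\left\lfloor \frac{n^l-1}{2}\right\rfloor \gg n^l$, we obtain $\frac{\nu(n^l,\mathbb{M})}{\left\lfloor \frac{n^l-1}{2}\right\rfloor}\leq \frac{C}{\left\lfloor \frac{n^l-1}{2}\right\rfloor}\ll \frac{C}{n^l}$, so that
\[
\mathcal{E}(l,\mathbb{M})=\sum_{n=3}^{\infty}\frac{\nu(n^l,\mathbb{M})}{\left\lfloor \frac{n^l-1}{2}\right\rfloor}\ll C\sum_{n=3}^{\infty}\frac{1}{n^l}<\infty,
\]
where the final series converges precisely because $l\geq 2$. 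This contradicts the hypothesis $\mathcal{E}(l,\mathbb{M})=\infty$ and forces $\mathcal{C}(n^l,\mathbb{M})$ to be ascending at infinitely many spots.

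The main obstacle is the passage from \emph{finitely many ascending spots} to \emph{eventually non-increasing}, which hinges on reading the definition of an ascending spot correctly and on the same monotonicity bookkeeping already invoked in Theorem \ref{ascendingspots}; once that is granted, boundedness of an integer-valued non-increasing tail and the convergence of $\sum n^{-l}$ for $l\geq 2$ finish the argument routinely. One should also double-check the borderline case $l=2$, where $\sum n^{-2}$ still converges, so that no separate treatment is required.
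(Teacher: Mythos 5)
Your proof takes essentially the same route as the paper's: assume finitely many ascending spots, conclude that $\nu(n^l,\mathbb{M})$ is eventually non-increasing and hence bounded, and derive $\mathcal{E}(l,\mathbb{M})<\infty$ for a contradiction. You actually supply the step the paper leaves implicit, namely the comparison of the energy series with $C\sum n^{-l}$, which converges for $l\geq 2$.
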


\begin{proof}
Let $\mathcal{E}(l,\mathbb{M})=\infty$ and assume that CoP $\mathcal{C}(n^l,\mathbb{M})$ is ascending at finitely many \emph{spots}. We deduce 
\begin{align*}
\lim \limits_{n\longrightarrow \infty}\nu(n^l,\mathbb{M})<\infty.
\end{align*}
This implies $\mathcal{E}(l,\mathbb{M})<\infty$, which violates the requirement of the statement.
\end{proof}

\section{Remarks}\label{sec:main}

The Erd\H{o}s-Tur\'{a}n additive bases conjecture is the assertion:

\begin{conjecture}[Erd\H{o}s-Tur\'{a}n]
Let $\mathbb{B}\subset \mathbb{N}$ and 
\begin{align}
  r_{\mathbb{B}}(n):=\# \left \{(a,b)\in \mathbb{B}^2|~ a+b=n\right \}.\nonumber
\end{align}
If $r_{\mathbb{B}}(n)>0$ for all sufficiently large values of $n$, then \begin{align}
    \limsup \limits_{n\longrightarrow \infty}~r_{\mathbb{B}}(n)=\infty.\nonumber
\end{align}
\end{conjecture}
\bigskip

Using the framework of the \emph{circle of partitions}, we formulate a stronger version of the Erd\H{o}s-Tur\'{a}n additive bases conjecture.

\begin{conjecture}
Let $\mathbb{B}\subset \mathbb{N}$ with $\# \left \{n\leq x|~n\in \mathbb{B}\right\}\sim x^{1-\epsilon}$ for any $0<\epsilon\leq \frac{1}{2}$ and set
\begin{align*}
   \mathcal{G}_{\mathbb{B}}(n):=\nu(n,\mathbb{B})
\end{align*}
If $\mathcal{G}_{\mathbb{B}}(n)>0$ for all  sufficiently large values of $n$, then
\begin{align}
    \limsup \limits_{n\longrightarrow \infty}\mathcal{G}_{\mathbb{B}}(n)=\infty.\nonumber
\end{align}
\end{conjecture}

\footnote{
\par
.}%

\bibliographystyle{amsplain}

\begin{thebibliography}{10}

\bibitem {helfgott2013ternary} H.A. Helfgott, \textit{The ternary Goldbach conjecture is true}, arXiv preprint arXiv:1312.7748, 2013.\\

\bibitem {erdos1941problem} P. Erdos and P. Tur{\'a}n, \textit{Unsolved problems in number theory}, J. London Math. Soc, vol.16(4),1941 , 212--215.\\

\bibitem {tao2006additive} T. Tao and V.H. Vu, \textit{Additive combinatorics}, Cambridge University Press, vol.105, 2006.\\

\bibitem {nathanson1996additive} M.B. Nathanson, \textit{Additive number theory}, Springer New York, vol. 164, 1996.\\

\bibitem {ruzsa1994plunnecke} I.Z. Ruzsa, \textit{An application of graph theory to additive number theory}, Scientia, Ser. A, vol. 3:97-109, 1989, 9.\\

\end{thebibliography}

\end{document}